\newcommand{\figdir}{figures/}
\newcommand{\E}{\mathbb{E}}
\newcommand{\Pb}{\mathbb{P}}
\newcommand{\N}{\mathbb{N}}
\newcommand{\Z}{\mathbb{Z}}
\newcommand{\R}{\mathbb{R}}
\newcommand{\I}{\mathds 1}
\theoremstyle{plain}
\newtheorem{theo}{Theorem}[section]
\newtheorem{lemma}{Lemma}[section]
\newtheorem{cor}[lemma]{Corollary}
\theoremstyle{definition}
\newtheorem*{claim*}{Claim}
\theoremstyle{remark}
\newtheorem{question}{Question}
\begin{document}

\title{Fixation for Distributed Clustering Processes}
\author{M. R. Hilário\footnote{
  Instituto de Matemática Pura e Aplicada, Rio de Janeiro, Brazil}, 
  O. Louidor\footnote{
  Courant Institute of Mathematical Sciences, New York
                   University, New York, NY USA},
  C. M. Newman\footnotemark[2],\\
  L. T. Rolla\footnote{Département de Mathématiques et Applications,
  École Normale Supérieure, Paris, France},
  S. Sheffield\footnote{Department of Mathematics, Massachusetts Institute of
                   Technology, Cambridge, MA USA},
  V. Sidoravicius\footnote{
  Centrum voor Wiskunde en Informatica, Amsterdam, Netherlands}
  \ \footnotemark[1]}
\date{January 13, 2010}
\maketitle
\begin{abstract}
We study a discrete-time resource flow in $\mathbb{Z}^d$, where wealthier
vertices attract the resources of their less rich neighbors.
For any translation-invariant probability distribution of initial resource
quantities, we prove that the flow at each vertex terminates after finitely many
steps.
This answers (a generalized version of) a question posed by van den Berg and Meester in 1991.
The proof uses the mass-transport principle and extends to other graphs.
\end{abstract}

This preprint has the same numbering for sections, theorems, equations and figures as the published
article \emph{``Comm. Pure Appl. Math. 63 (2010): 926--934.''}

{\small
AMS Subject Classifications: 60K35, 90B15, 68M14.

Keywords: clustering, mass transport, cellular automaton.}

\section{The model and results}

We consider the following model for distributed clustering.
Initially each vertex $x\in\Z^d$ is assigned a random amount of resource, $0
\leqslant C_0(x) \leqslant \infty$, sampled according to a translation-invariant
distribution.
We denote $x \sim y$ if $x,y$ are adjacent on $\Z^d$, write $x \simeq
y$ if $x=y$ or $x\sim y$, and take $\mathcal G_x=\{y:y\simeq x\}$.
At step $n=0,1,2,\dots$, each vertex $x$ holding some amount of resource will
transfer all its resource to the vertex $a_n(x)\in\mathcal G_x$
with the maximal amount of resource.
All the vertices update simultaneously, leading to the state
$\big(C_{n+1}(x),x\in\Z^d\big)$, where $C_{n+1}(x)$ is the sum of
resources transferred to $x$ at step $n$.
More precisely, we take $a_n(x)=\mathrm{argmax} \{C_n(y),y\in \mathcal G_x\}$ if
$C_n(x)>0$, and otherwise $a_n(x)=x$.
If there is more than one $y\in \mathcal G_x$ that attains the maximum, we say
there is a \emph{tie} at $x$ and in this case $a_n(x)$ is chosen uniformly at
random among the maximizing vertices.
Finally we take $E_n(y) = \{x\in \mathcal G_y:a_n(x)=y\}$ and $C_{n+1}(y) =
\sum_{x \in E_n(y)} C_n(x)$.

Note that, except for the possible tie breaking during the dynamics, all the
randomness is contained in the initial data.
Let $\Pb$ and $\E$ denote the underlying probability measure and its
expectation, for both the initial resource quantities and possible tie breaks.

This model is a simple example of a self-organized structure that emerges from a
disordered initial state.
Instances of this type of phenomenon in several fields of science are
mentioned in~\cite{coffman91}, where the model was introduced.

We are interested in the following phenomena, concerning the stability
properties of this dynamics.
\begin{question}
 \label{item3stability}
 Does each vertex transfer its resource eventually to the same fixed vertex?
\end{question}
\begin{question}
\label{item1fixation}
 Does the the flow at each vertex terminate after finitely many steps?
\end{question}
\begin{question}
 \label{item2conservation}
 If the answer to the previous question is affirmative, is the expected value of
the final resource equal to the expected value of the initial resource?
\end{question}

Van den Berg and Meester~\cite{berg91} considered this model on $\Z^2$ with
continuously-distributed i.i.d.\ initial resource quantities 
and answered Question~\ref{item3stability}.
Namely, they showed that $\Pb[a_n(x) \mbox{ is eventually constant}]=1$.
For the case of i.i.d.\ initial distributions supported on $\N$
(see~\cite{berg91} for the precise hypotheses), they also answered
Question~\ref{item1fixation} in any dimension, i.e., they proved that $\Pb[a_n(x)=x\mbox{
eventually}]=1$.

Again for i.i.d. continuous initial distributions on the two-dimensional
lattice, van den Berg and Ermakov~\cite{berg98} considered some percolative
properties of the configuration after one step, and reduced
Question~\ref{item2conservation} to a finite computation.
As the calculation would be too big, even for the most powerful computers, they
performed Monte Carlo simulations, obtaining overwhelming evidence that
the answer to this question is positive.


In this paper we answer Question~\ref{item1fixation} in a rather general
setting.
We consider any dimension and allow any translation-invariant initial
distribution.
Question~\ref{item2conservation} remains open.

\begin{theo}
\label{theo1}
On $\Z^d$, for any translation-invariant distribution of the initial resources,
the flow at each vertex almost surely stops after finitely many steps.
\end{theo}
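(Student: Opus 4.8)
I would argue by contradiction: suppose that with positive probability the flow at the origin does not terminate, i.e.\ $a_n(0)\ne 0$ for infinitely many $n$; by translation invariance of the initial law and equivariance of the update rule (tie-breaks included) the same would then hold at every vertex. The plan rests on two structural features. First, the dynamics is conservative and moves resource in indivisible blocks: the entire content of a site is relayed to a single target, so the resource initially at $y$ --- a ``parcel'' of mass $C_0(y)$ --- follows a deterministic trajectory $X_n(y)$ with $X_0(y)=y$, $X_{n+1}(y)=a_n(X_n(y))$, and two parcels that ever share a site stay together forever; thus parcels only coalesce, and $C_n(x)>0$ exactly when a parcel sits at $x$ at time $n$. (A site with no resource never receives any --- if $a_n(w)=x$ and $C_n(w)>0$ then $C_n(x)\ge C_n(w)>0$ --- so if $C_0(x)=0$ the flow at $x$ terminates trivially.) Second, the load under a parcel, $R_n(y):=C_n(X_n(y))$, is non-decreasing in $n$, since $X_{n+1}(y)$ receives at least the block $C_n(X_n(y))$ that the parcel brings.

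The engine of the proof is a monovariant from the mass-transport principle. I would take $\phi:=\I_{(0,\infty]}$, which has $\phi(0)=0$ and is subadditive, and transport a mass $\phi(C_n(x))$ from $x$ to $a_n(x)$: this is a non-negative equivariant transport whose out-mass at $x$ is $\phi(C_n(x))$ and whose in-mass at $y$ is $\sum_{x:a_n(x)=y}\phi(C_n(x))\ge\phi\big(\sum_{x:a_n(x)=y}C_n(x)\big)=\phi(C_{n+1}(y))$. The mass-transport principle then gives
\[
  \Pb[C_n(0)>0]-\Pb[C_{n+1}(0)>0]=\E[D_n(0)]\ge 0,\qquad
  D_n(x):=\#\{w:a_n(w)=x,\ C_n(w)>0\}-\I[C_{n+1}(x)>0],
\]
and since the left-hand side telescopes to at most $1$ we get $\E\big[\sum_n D_n(0)\big]\le 1$, hence $\sum_n D_n(0)<\infty$ a.s. As $D_n(x)$ counts the \emph{extra} parcel-streams meeting at $x$ at time $n$, this shows that a.s.\ each vertex witnesses only finitely many coalescences; hence a.s.\ for every $v$ there is a finite $T_v$ after which no parcel ever coalesces at $v$.

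Next I would combine this with a ``no room'' observation: if $0$ fires to $v$ at a time $n>T_v$ then $v$ must fire at time $n$ too, for otherwise $v$ keeps its own positive content and also receives $0$'s, a coalescence at $v$ after $T_v$. Iterating --- and noting that after $j$ steps one stays in the ball of radius $j$ --- this yields that at every sufficiently large firing time $n$ of $0$ there is a nearest-neighbour path $0=v_0\simeq v_1\simeq\cdots\simeq v_{k(n)}$ whose vertices all fire at time $n$ and along which $C_n(v_0)\le\cdots\le C_n(v_{k(n)})$, with $k(n)\to\infty$. Non-termination thus forces arbitrarily long ``firing caravans'' of non-decreasing load, and the final --- and hardest --- step is to rule these out. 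On a maximal flat stretch of such a caravan every vertex is tied and is forwarded along the stretch by its own independent tie-break (probability $\le\tfrac12$ each), which heuristically prevents arbitrarily long flat caravans from recurring; a caravan containing a long strictly increasing run, on the other hand, amounts to a long strictly increasing nearest-neighbour path of $C_n$-values confined to a ball of comparable radius, which one does not expect a translation-invariant field to support at all scales. Turning either heuristic into a rigorous estimate is the real obstacle: I would handle the strictly-increasing case with a further mass-transport bound on the expected length of increasing firing-paths through the origin, together with the coalescence bookkeeping above, and the flat case through a careful quantitative use of the tie-break randomness. Since the whole scheme uses only the mass-transport principle and translation invariance, it should transfer verbatim to any transitive unimodular graph, which gives the claimed extension.
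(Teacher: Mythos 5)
Your setup is sound as far as it goes: the parcel/coalescence picture is exactly the paper's $L_n,S_n$ bookkeeping, and your mass-transport bound on $\sum_n D_n(0)$ is a correct variant of the paper's first lemma (which bounds $\sum_n\big(|E_n(x)|-1\big)^+$ in the same way). But the proof stops where the difficulty begins, and you say so yourself: the ``firing caravans'' are never ruled out, and the two heuristics you offer for that step cannot be made rigorous at the stated level of generality. For the strictly increasing case, a translation-invariant field \emph{can} support arbitrarily long strictly increasing nearest-neighbour paths through the origin with non-vanishing probability --- take a mixture over $m$ of configurations $C_0(x)=g\big((x_1+U)\bmod m\big)$ with $g$ increasing and $U$ uniform --- so ``one does not expect a translation-invariant field to support this'' is simply false; only for i.i.d.\ continuous data does the factorial decay you are implicitly relying on appear, whereas the theorem is claimed for arbitrary translation-invariant laws. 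There is also a gap inside the caravan construction itself: the chain $v_0\to v_1\to\cdots$ can close into a cycle (which forces all $C_n$-values along it to be tied), so the claim $k(n)\to\infty$ already requires a ``finitely many ties per vertex'' lemma that you have not proved.

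The paper's route avoids caravans entirely. After establishing finiteness of the multi-source and empty-source events and of ties, it shows each vertex eventually has a permanent type, and that for a perpetually firing vertex the increment $C'_n(x)-C_n(x)=C_n\big(a_n(x)\big)-C_n(x)$ tends to $0$. The decisive step is then a second, quantitative use of mass transport: for a fixed parcel $v$ the positive increments $A'_n(v)-A_n(v)$ can only change when the parcel's cluster grows, so while the cluster size stays $\leqslant k$ their infimum is a strictly positive random variable, whence $\Pb\big(0<A'_n(v)-A_n(v)<\delta,\ N_n(v)\leqslant k\big)\to0$ as $\delta\to0$ uniformly in $n$; transporting this from parcels to sites and using $\E\big(|S_n(x)|\big)=1$ to dispose of the case $|S_n(x)|>k$ closes the argument. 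This is the mechanism your proposal is missing: perpetual motion forces either increments bounded below (contradicting $C'_n-C_n\to0$) or perpetual coalescence (contradicting $\E|S_n(x)|=1$). As written, your proposal establishes only preliminary facts and does not prove the theorem.
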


The proof of local fixation essentially consists of ruling out, one by one, all
the possibilities which could lead to a different scenario.
For this goal we use the mass-transport principle in different ways.

In Section~\ref{sec:lemmas} we prove Theorem~\ref{theo1}.
In Section~\ref{sec:conclusion} we
discuss generalizations of our results
and
conclude with a remark about Question~\ref{item2conservation}.

\section{Proof of Theorem~\ref{theo1}}
\label{sec:lemmas}

Theorem~\ref{theo1} will be proved using a combination of lemmas.
We begin by introducing some extra notation.

Let $|\cdot|$ denote the cardinality of a set.
For each $x \in \Z^d$ and $n=0,1,2,\dots$, one and only one of the following
events will happen (see Figure~\ref{fig1}):
${\cal A}_n(x) = \big[C_n(x)=0\big]$,
${\cal B}_n(x) = \big[C_n(x)>0, E_n(x) = \{x\}\big]$,
${\cal C}_n(x) = \big[E_n(x) = \{z\} \mbox{ for some } z\ne x\big]$,
${\cal D}_n(x) = \big[|E_n(x)| > 1\big]$,
${\cal E}_n(x) = \big[E_n(x) = \emptyset \big]$.
For   $n=0, 1, 2, \dots$ we also set 
\begin{align}
C'_n(x) & := C_n\big(a_n(x)\big), 
\notag \\
L_{n+1}(v) & := a_{n}\big(L_{n}(v)\big),  \mbox{ with }
L_0(v) = v,
\notag \\
S_n(w) &: = \{v:L_n(v)=w\}.  \notag
\end{align}

Observe that $C'_n(x) \geqslant C_n(x)$.
$L_{n}(v)$ is the location at time $n$ of
the resource that initially started at vertex $v$, and $S_n(w)$
denotes the set of all vertices whose initial resource
is located at vertex $w$ at time $n$.

\begin{figure}[!htb]
 \small
 \psfrag{w}{$w$}
 \psfrag{x}{$x$}
 \psfrag{y}{$y$}
 \psfrag{z}{$z$}
 \psfrag{E(w)}{$E(w)$}
 \psfrag{C(x)}{$C(x)$}
 \psfrag{C'(x)}{$C'(x)$}
 \psfrag{C(y)=C'(y)=0}{$C(y)=C'(y)=0$}
 \psfrag{z=a(x)}{$z=a(x)$}
 \psfrag{Aa}{$\mathcal A$}
 \psfrag{Bb}{$\mathcal B$}
 \psfrag{Cc}{$\mathcal C$}
 \psfrag{Dd}{$\mathcal D$}
 \psfrag{Ee}{$\mathcal E$}
 \psfrag{S(x)}{$S(x)$}
 \psfrag{x=L(v)}{$x=L(v)$}
 \psfrag{v in S(x)}{$v \in S(x)$}
 \centering
 \includegraphics[width=13cm]{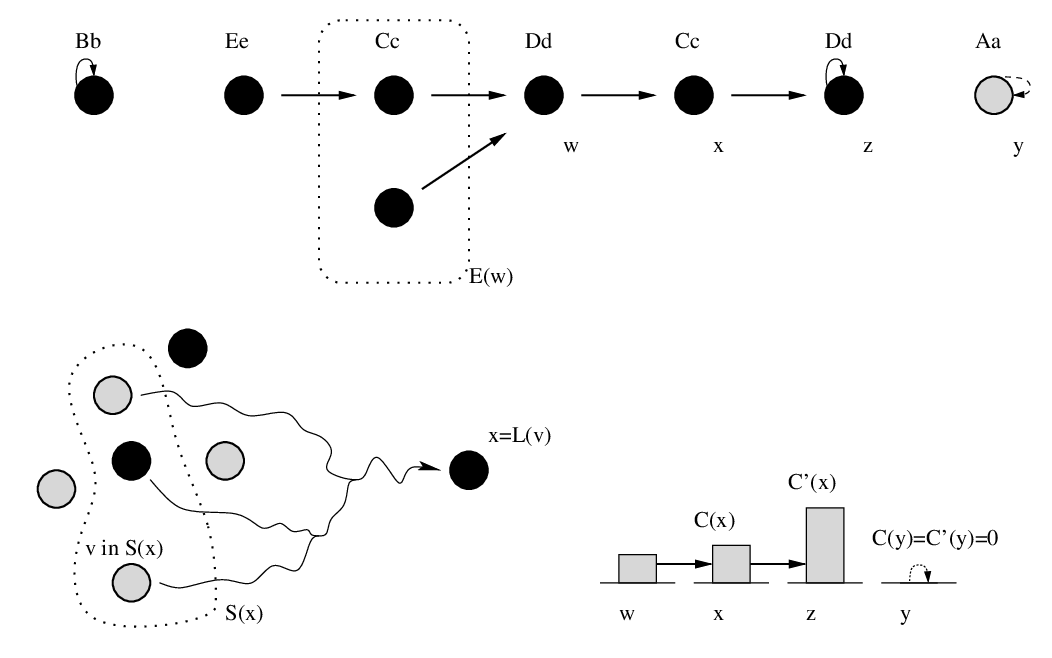}
 \caption{
 Schematic diagram for the notation used in this paper, omitting the index~$n$.}
 \label{fig1}
\end{figure}

\begin{lemma}[\cite{berg91}, p.337]
\label{lemma1}
For each $x$, $\Pb\big({\cal D}_n(x)\mbox{ i.o.}\big)=\Pb\big({\cal
E}_n(x)\mbox{ i.o.}\big)=0$.
\end{lemma}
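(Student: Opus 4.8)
The plan is to bound $\sum_n \Pb({\cal D}_n(x))$ and $\sum_n \Pb({\cal E}_n(x))$ by a convergent series and then apply Borel--Cantelli. The only quantity I need is the occupation density $p_n := \Pb(C_n(x)>0)$, which does not depend on $x$ by translation invariance. (If $p_0=0$ then $C_0\equiv 0$ a.s., hence $C_n\equiv 0$ for all $n$ and there is nothing to prove, so assume $p_0>0$.) The first step is the monotonicity $\{C_{n+1}(x)>0\}\subseteq\{C_n(x)>0\}$: if $C_{n+1}(x)>0$ then some $y\in\mathcal G_x$ with $C_n(y)>0$ had $a_n(y)=x$, and since $a_n(y)$ is an $\mathrm{argmax}$ over $\mathcal G_y\ni x$ we get $C_n(x)\ge C_n(y)>0$; the default value $a_n(y)=y$ cannot occur here because it requires $C_n(y)=0$. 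Consequently $p_n$ is non-increasing, converges to some $p_\infty\ge 0$, and $\sum_{n\ge0}(p_n-p_{n+1})=p_0-p_\infty\le 1<\infty$. Moreover the same inclusion gives $p_n-p_{n+1}=\Pb\big(C_n(x)>0,\ C_{n+1}(x)=0\big)$.

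The event ${\cal E}_n(x)=[E_n(x)=\emptyset]$ is now immediate: on ${\cal E}_n(x)$ we have $C_{n+1}(x)=\sum_{y\in E_n(x)}C_n(y)=0$, and also $a_n(x)\neq x$ (else $x\in E_n(x)$), which forces $C_n(x)>0$; thus ${\cal E}_n(x)\subseteq\{C_n(x)>0,\ C_{n+1}(x)=0\}$, so $\sum_n\Pb({\cal E}_n(x))\le p_0-p_\infty<\infty$ and $\Pb({\cal E}_n(x)\text{ i.o.})=0$.

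For ${\cal D}_n(x)=[|E_n(x)|>1]$ I would invoke the mass-transport principle, which applies since the pair $(C_n,a_n)$ is a translation-equivariant function of a translation-invariant input (the initial data together with, say, i.i.d.\ uniform marks used to resolve ties). Fix $n$ and send one unit of mass from every occupied vertex $y$ to $a_n(y)$; the mass leaving a vertex is $\I[C_n(\cdot)>0]$, with expectation $p_n$, while the mass entering $x$ equals $K_n(x):=|\{y:C_n(y)>0,\ a_n(y)=x\}|$, so $\E[K_n(x)]=p_n$. As in the previous paragraph one checks $K_n(x)\ge 1\iff C_{n+1}(x)>0$, hence $\Pb(K_n(x)\ge1)=p_{n+1}$; and $K_n(x)\ge2$ coincides with ${\cal D}_n(x)$, because $|E_n(x)|>1$ already forces $C_n(x)>0$, after which $E_n(x)=\{y:C_n(y)>0,\ a_n(y)=x\}$. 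Therefore
\[
  p_n=\E[K_n(x)]\ \ge\ \Pb(K_n(x)\ge1)+\Pb(K_n(x)\ge2)\ =\ p_{n+1}+\Pb({\cal D}_n(x)),
\]
so $\sum_n\Pb({\cal D}_n(x))\le p_0-p_\infty<\infty$ and Borel--Cantelli again gives $\Pb({\cal D}_n(x)\text{ i.o.})=0$.

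I expect no serious obstacle: the probabilistic content is just the monotonicity of $p_n$ and a single mass-transport identity. The only thing requiring care is the bookkeeping around the tie-breaking default $a_n(y)=y$ and the set identifications $\{C_{n+1}(x)>0\}=\{K_n(x)\ge1\}$ and ${\cal D}_n(x)=\{K_n(x)\ge2\}$, together with checking that the transport rule $\I[C_n(x)>0]\,\I[a_n(x)=y]$ is diagonally invariant so that the mass-transport principle is legitimately applicable.
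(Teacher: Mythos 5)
Your proof is correct and is essentially the paper's argument in different packaging: the paper transports one unit from every vertex (empty vertices to themselves) to get $\E\big(|E_n(x)|\big)=1$, deduces $\Pb\big(\mathcal D_n(x)\big)\leqslant\Pb\big(\mathcal E_n(x)\big)$, and sums using the fact that $\mathcal E_n(x)$ occurs for at most one $n$; you transport only from occupied vertices to get $\E\big[K_n(x)\big]=p_n$ and telescope $p_n-p_{n+1}$, but since $p_n-p_{n+1}=\Pb\big(\mathcal E_n(x)\big)$ exactly, the two bounds coincide. Both arguments then conclude with Borel--Cantelli.
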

\begin{proof}
We repeat the proof of \cite{berg91} for the convenience of the reader.

First, $\mathcal E_n(x)$ can happen for at most one value of $n$, after which
one always has $\mathcal A_n(x)$.
By translation-invariance one has
$
 \E \big( |E_n(x)| \big) = \sum_{w} \Pb \big[a_n(x+w)=x\big] =
\sum_{w} \Pb \big[a_n(x)=x-w\big]=1.
$
This is the most common use of the mass transport principle.
Defining $D_n(x) = |E_n(x)|-1$ gives
$
 \E \big[ D_n(x) \big] = 0
$.
So $\Pb \big[ D_n(x) >0 \big] 
 \leqslant \E \big[ D_n(x)^+ \big] = \E \big[ D_n(x)^- \big] = \Pb
\big[ D_n(x) = -1 \big]$.
But the last event corresponds to $\mathcal E_n(x)$, therefore $\sum_n \Pb\big[
D_n(x) >0\big] \leqslant 1$.
Since $\big[D_n(x) >0\big]$ corresponds to $\mathcal D_n(x)$, the result follows
by the Borel-Cantelli Lemma.
\end{proof}

Note that $C_{n+1}(x)>C_{n}(x)$ can only happen on the event ${\cal D}_n(x)$.
So, almost surely, $\big(C_{n}(x)\big)$ is a nonnegative, eventually
non-increasing sequence, and therefore $\lim_{n \to \infty} C_n(x)$ exists.

The following lemma permits us to extend the results of~\cite{berg91} to a
general translation-invariant probability distribution for the initial resource
quantities.
\begin{lemma}
\label{lemma2}
$\Pb$-a.s., ties cannot happen infinitely often at a fixed vertex.
\end{lemma}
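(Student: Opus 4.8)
\emph{Proof plan.} I would argue by contradiction: if ties recurred at some vertex with positive probability, I would produce a (random) vertex at which the event~$\mathcal D_n$ occurs infinitely often, contradicting Lemma~\ref{lemma1}. Let $\mathcal F_n$ be the $\sigma$-field generated by the initial resources and by all tie-breaks made strictly before time~$n$; then $C_n(x)$ and the events $\mathcal A_n(x),\dots,\mathcal E_n(x)$ are $\mathcal F_n$-measurable, while conditionally on $\mathcal F_n$ the tie-breaks at time~$n$ are independent across vertices and each is uniform on its set of maximizers, so that for every $v$ and every $u\in\mathcal G_v$ one has either $\Pb[a_n(u)=v\mid\mathcal F_n]=0$ or $\Pb[a_n(u)=v\mid\mathcal F_n]\ge(2d+1)^{-1}$. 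From Lemma~\ref{lemma1} and a countable union over vertices, almost surely each $x$ has a (random) time $N(x)$ past which neither $\mathcal D_n(x)$ nor $\mathcal E_n(x)$ occurs; for $n\ge N(x)$ we then have $|E_n(x)|=1$, the sequence $(C_n(x))_{n\ge N(x)}$ is non-increasing, and $C_n(x)\to c(x)$ for some limit $c(x)$.

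The extra input from Lemma~\ref{lemma1} that I would isolate is: with $U_n(v):=\{u\in\mathcal G_v:\Pb[a_n(u)=v\mid\mathcal F_n]>0\}$, almost surely $|U_n(v)|\le1$ for all large~$n$, for every~$v$. Indeed, if $|U_n(v)|\ge2$ for infinitely many~$n$ then, choosing two elements of $U_n(v)$ at each such time, the conditional independence and the lower bound above give that both send their resource to~$v$ with conditional probability $\ge(2d+1)^{-2}$; as this event is contained in~$\mathcal D_n(v)$ and $\mathcal D_n(v)\in\mathcal F_{n+1}$, a conditional Borel--Cantelli lemma forces $\mathcal D_n(v)$ infinitely often, contradicting Lemma~\ref{lemma1}.

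Now assume for contradiction that ties occur infinitely often at some vertex with positive probability; by translation invariance take that vertex to be the origin, and work on $\{\text{ties i.o.\ at }0\}$ together with the full-measure events above. Then $(C_n(0))$ is eventually positive, else it vanishes and ties stop. Write $e_n(x)$ for the unique element of $E_n(x)$ when $n\ge N(x)$; since $a_n(e_n(x))=x$ we have $e_n(x)\in U_n(x)$, so $U_n(0)=\{e_n(0)\}$ for large~$n$. If $0$ were a maximizer of $\mathcal G_0$ at infinitely many tie times, then there $0\in U_n(0)$, hence $e_n(0)=0$ and $a_n(0)=0$; but conditionally on $\mathcal F_n$ the tie-break keeps the resource at~$0$ with probability $\le1/2$, so $\{a_n(0)\ne0\}$ occurs at infinitely many of these times, and there it forces $e_n(0)\ne0$ and $|U_n(0)|\ge2$ --- impossible. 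Hence for large~$n$ every tie time of~$0$ lies in $\mathcal C_n(0)$: the resource of~$0$ passes to a maximizer $b_n\ne0$, and $0$ receives only from a single neighbour $z_n:=e_n(0)\ne0$, so $C_{n+1}(0)=C_n(z_n)\le C_n(0)$ and $C_n(0)=\max_{\mathcal G_{z_n}}C_n$. Passing to a subsequence of tie times along which $z_n$ equals a fixed vertex~$z^{*}$ and letting $n\to\infty$ gives $c(z^{*})=c(0)=\max_{\mathcal G_{z^{*}}}c$. Granting --- this is the crux, discussed below --- that along infinitely many such times there is a tie at~$z^{*}$ with $z^{*}$ among the maximizers of $\mathcal G_{z^{*}}$, note that for $n\ge N(z^{*})$ the vertex $z^{*}$ receives from a single neighbour, which is not~$z^{*}$ (since $a_n(z^{*})=0$), and passing to a further subsequence take that neighbour to be a fixed $w^{*}\ne z^{*}$. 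Then at these infinitely many times the $\mathcal F_n$-measurable event $\Gamma_n$ that $z^{*}$ is a maximizer of $\mathcal G_{z^{*}}$ and of $\mathcal G_{w^{*}}$ (with $C_n(w^{*})>0$) holds; on $\Gamma_n$, conditionally on $\mathcal F_n$, with probability $\ge(2d+1)^{-1}$ we have $a_n(z^{*})=z^{*}$ and, independently, with probability $\ge(2d+1)^{-1}$ we have $a_n(w^{*})=z^{*}$, and then $z^{*},w^{*}\in E_n(z^{*})$, so $\mathcal D_n(z^{*})$ occurs. Thus $\Pb[\mathcal D_n(z^{*})\mid\mathcal F_n]\ge(2d+1)^{-2}\I_{\Gamma_n}$ with $\Gamma_n$ occurring infinitely often, and the same conditional Borel--Cantelli argument yields $\mathcal D_n(z^{*})$ infinitely often, contradicting Lemma~\ref{lemma1}.

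The genuine obstacle is the granted step: upgrading the limiting identities $c(z^{*})=c(0)=\max_{\mathcal G_{z^{*}}}c$ to an actual tie at~$z^{*}$ at infinitely many finite times with $z^{*}$ itself among the maximizers. This is automatic once the relevant values have stabilized --- for $\N$-valued initial data $(C_n(\cdot))$ is eventually constant and the argument above runs verbatim --- but for a general translation-invariant law the resources may decrease forever, so two vertices with equal limiting value need never be exactly equal. I would handle this either by iterating the construction so as to land on a vertex that is locally maximal for the limiting configuration~$c$, at which genuine ties are forced, or by an additional mass-transport estimate forbidding a vertex from shedding a strictly positive amount of resource at infinitely many tie times.
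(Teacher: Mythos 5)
There is a genuine gap, and you identify it yourself: the step ``granting that along infinitely many such times there is a tie at $z^{*}$ with $z^{*}$ among the maximizers'' is not a technicality but the whole difficulty. Your argument only yields the limiting identities $c(z^{*})=c(0)=\max_{\mathcal G_{z^{*}}}c$, and for a general translation-invariant law (e.g.\ continuous initial values) the quantities $C_n(\cdot)$ may strictly decrease forever, so equal limits never force equality, hence never force a tie, at any finite time. Neither of your two suggested repairs is carried out, and it is not clear either one can be: iterating the construction only produces more vertices with equal \emph{limiting} values, and the ``additional mass-transport estimate'' is not formulated. So the proof is incomplete exactly where you say it is.

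The idea you are missing is that no structure of the configuration at large times is needed at all: the randomness of the tie-break at the tied vertex itself already does the work. If there is a tie at $y$ at time $n$ and $x\simeq y$ is one of the maximizing candidates (with $C_n(x)>0$), condition on everything except the tie-break at $y$ at step $n$ (call this $\mathcal F_{y,n}$; note $E_n(x)\setminus\{y\}$ is measurable with respect to it). Then either some vertex other than $y$ sends to $x$, in which case with conditional probability at least $\frac1{2d+1}$ the tie-break chooses $a_n(y)=x$ and $\mathcal D_n(x)$ occurs; or no such vertex sends to $x$, in which case with conditional probability at least $\frac12$ the tie-break chooses $a_n(y)\ne x$, so $\mathcal E_n(x)$ occurs, $C_m(x)=0$ for all $m>n$, and $x$ can never again be a candidate in a tie at $y$. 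Hence, by conditional Borel--Cantelli, if some fixed candidate $x$ were involved in ties at $y$ infinitely often, $\mathcal D_n(x)$ would occur infinitely often, contradicting Lemma~\ref{lemma1}; since $y$ has only finitely many possible candidates, ties at $y$ happen finitely often. Your preliminary claim that $|U_n(v)|\leqslant1$ eventually is in exactly this spirit (conditional Borel--Cantelli against Lemma~\ref{lemma1}), but it is aimed at the receiving vertex rather than at the tied vertex's candidates, which is why you are then driven to the limiting argument that cannot be closed; redirecting that same one-step dichotomy to the pair (candidate $x$, tied vertex $y$) finishes the proof with no limits and no exact ties needed elsewhere.
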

\begin{proof}
For a pair of vertices $y\simeq x$, let
\[
 {\cal T}_n(x,y)=\left[0<C_n(x)=C_n(z)=\max\big\{C_n(w),w\simeq y\big\}
 \mbox{ for some } z\simeq y, z\ne x\right]
\]
denote the event that there is a tie at $y$ at time $n$, and $x$ is one of
the candidates for its resource.
Denote by ${\cal F}_{y,n}$ the $\sigma$-field generated by
\[
\big(C_m(z),m\leqslant n \mbox{ and }z\in\Z^d\big) \mbox{ and }\big(a_m(z),m<n \mbox{ and }z\in\Z^d \mbox{ or }
m=n \mbox{ and } z\ne y\big),
\]
i.e., ${\cal F}_{y,n}$ contains all the information up to step $n$, except for
the possible tie breaking at vertex $y$ at the $n$-th step.
Write $M_{x\backslash y}^n = \big| E_n(x)\backslash \{y\} \big|$.
Note that $M_{x\backslash y}^n$ and $\mathcal T_n(x,y)$ are ${\cal
F}_{y,n}$-measurable.
Conditioning on ${\cal F}_{y,n}$, if $M_{x\backslash y}^n = 0$ and ${\cal
T}_n(x,y)$ occurs, then with probability at least $\frac12$, $a_n(y)\ne x$, in
which case ${\cal E}_n(x)$ will happen and thus $C_m(x)=0$ for all $m>n$
(therefore ${\cal T}_n(x,y)$ can never happen again).
If $M_{x\backslash y}^n > 0$ and ${\cal T}_n(x,y)$ occurs, then with probability
at least $\frac1{2d+1}$, $a_n(y) = x$, in which case ${\cal D}_n(x)$ will
happen.
Therefore, the occurrence of ${\cal T}_n(x,y)$ for infinitely many $n$'s implies
almost surely the occurrence of ${\cal D}_n(x)$ infinitely often.
But the latter event has probability 0 by Lemma~\ref{lemma1}.
\end{proof}

\begin{lemma}[\cite{berg91}, p.338]
\label{lemma3}
$\Pb$-a.s. for each vertex $x$, only one of the events $\limsup_n{\cal A}_n(x)$,
$\limsup_n{\cal B}_n(x)$ or $\limsup_n{\cal C}_n(x)$ will happen.
\end{lemma}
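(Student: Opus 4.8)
The plan is to reduce the statement, via Lemmas~\ref{lemma1} and~\ref{lemma2}, to a deterministic combinatorial fact about the sequence of events $\mathcal A_n(x),\dots,\mathcal E_n(x)$. Since $x$ has only $2d$ neighbours, the event that there is a finite (random) time $N=N(x)$ after which none of $\mathcal D_m(x)$, $\mathcal E_m(x)$, the events $\mathcal D_m(w)$ with $w\sim x$, or the ties at $x$ ever occur again has probability $1$; I work on this event and also use the observation recorded after Lemma~\ref{lemma1} that $C_{m+1}(v)>C_m(v)$ can hold only on $\mathcal D_m(v)$. Because for each $m$ exactly one of $\mathcal A_m(x),\dots,\mathcal E_m(x)$ holds, at least one of the three $\limsup$ events automatically occurs, so the point is to show at most one does.

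First I would note that the state $\mathcal A$ is absorbing at $x$: if $C_n(x)=0$ then no neighbour $v\sim x$ can have $a_n(v)=x$ (that would force $C_n(v)\leqslant C_n(x)=0$ and hence $a_n(v)=v$), so $E_n(x)=\{x\}$ and $C_{n+1}(x)=0$; iterating, $C_m(x)=0$ for every $m\geqslant n$. Thus $\limsup_n\mathcal A_n(x)$ equals the event that $C_m(x)=0$ for all large $m$, and there $\mathcal B_m(x)$ and $\mathcal C_m(x)$ fail eventually, so only $\mathcal A$ occurs infinitely often. It remains to work on the complementary event, where $C_m(x)>0$ for all large $m$; there $\mathcal A_m(x)$ also occurs only finitely often, so for $m\geqslant N$ every step at $x$ is of type $\mathcal B$ or $\mathcal C$, and I must rule out that both recur.

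The key step is that, for $m\geqslant N$, a step $\mathcal B_m(x)$ cannot be followed by $\mathcal C_{m+1}(x)$. Indeed $\mathcal B_m(x)$ gives $a_m(x)=x$, hence $C_m(x)\geqslant C_m(z')$ for every neighbour $z'$, and $C_{m+1}(x)=C_m(x)$; and $\mathcal C_{m+1}(x)$ gives $x\notin E_{m+1}(x)$, hence $a_{m+1}(x)=z'\ne x$ for some neighbour $z'$, which (no tie at $x$ at time $m+1$, so $z'$ is the unique maximiser of $C_{m+1}$ over $\mathcal G_x$) forces $C_{m+1}(z')>C_{m+1}(x)$. Combining, $C_{m+1}(z')>C_{m+1}(x)=C_m(x)\geqslant C_m(z')$, so $C_{m+1}(z')>C_m(z')$: the event $\mathcal D_m(z')$ occurs at a neighbour of $x$, impossible for $m\geqslant N$. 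Hence for $m\geqslant N$ a step of type $\mathcal B$ at $x$ is necessarily followed by another of type $\mathcal B$ (the only alternatives $\mathcal A,\mathcal C,\mathcal D,\mathcal E$ being excluded), so either some $\mathcal B_m(x)$ with $m\geqslant N$ occurs, whence $\mathcal B_m(x)$ holds for all later $m$ and $\mathcal C$ only finitely often, or no such $\mathcal B$ step occurs and $\mathcal C_m(x)$ holds for all $m\geqslant N$. In either case exactly one of the three $\limsup$ events takes place.

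I expect Step~2, the $\mathcal B\to\mathcal C$ exclusion, to be the only real obstacle: the idea is that for $x$ to pass from \emph{keeping} its resource to \emph{shedding} it while its own amount is unchanged, some neighbour must have strictly gained, an event Lemma~\ref{lemma1} has already confined to finitely many times. The rest is bookkeeping with the partition $\mathcal A,\dots,\mathcal E$, the absorbing property of $\mathcal A$, and Lemma~\ref{lemma2} to make the inequality at $z'$ strict.
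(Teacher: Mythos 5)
Your proof is correct and follows essentially the same strategy as the paper: use Lemmas~\ref{lemma1} and~\ref{lemma2} to fix a random time after which $\mathcal D$, $\mathcal E$ and ties are excluded near $x$, observe that $\mathcal A$ is absorbing, and then show that one of the two remaining states is absorbing as well. The only (immaterial) difference is that you prove $\mathcal B_m(x)\Rightarrow\mathcal B_{m+1}(x)$ by ruling out a strict increase $C_{m+1}(z')>C_m(z')$ at a neighbour of $x$, whereas the paper proves the dual implication $\mathcal C_n(x)\Rightarrow\mathcal C_{n+1}(x)$ using that there is no tie at the unique vertex feeding $x$; both yield the same trichotomy.
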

\begin{proof}
Once we know that there are finitely many ties at most, the argument of
\cite{berg91} can be applied.
We present the proof for the convenience of the reader.

By Lemmas~\ref{lemma1} and~\ref{lemma2} we can take $n_0$ so that neither ${\cal
D}_n(x)$ nor ${\cal E}_n(x)$ will happen, nor will there be a tie at any $y
\in\mathcal G_x$, for any $n \geqslant n_0$.
If ${\cal A}_n(x)$ happens for some $n$, it also happens for all $m>n$.
It is thus enough to prove that, if ${\cal C}_n(x)$ happens for some $n
\geqslant n_0$, then ${\cal C}_{n+1}(x)$ also happens.
Now, if $\mathcal C_n(x)$ happens, we have $C_{n+1}(z) \geqslant C_n(x) >
C_n(y) = C_{n+1}(x)$, where $E_n(x) = \{y\}$ and $z = a_n(x)$.
The strict inequality holds because there cannot be a tie at $y$.
Therefore $a_{n+1}(x) \neq x$ and, since ${\cal D}$ and ${\cal E}$ have been
ruled out, we must have ${\cal C}_{n+1}(x)$ again.
\end{proof}

By Lemma~\ref{lemma3} we can say that each vertex is 
uniquely either an ${\cal A}$-vertex or a
${\cal B}$-vertex or a ${\cal C}$-vertex.

\begin{cor}
\label{cor3}
$\Pb$-a.s., $C'_n(x) - C_n(x) \rightarrow 0$ for all $x\in\Z^d$.
\end{cor}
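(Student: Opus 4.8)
The plan is to deduce the corollary from the fact, already established above, that for every fixed vertex $y$ the sequence $\big(C_n(y)\big)_n$ is eventually non-increasing and hence converges, so that $C_n(y)-C_{n+1}(y)\to 0$. The mechanism is simple: the only way $x$ can have $C'_n(x)>C_n(x)$ is by sending its resource to a neighbour $y=a_n(x)\ne x$, and if at that moment no other vertex also feeds into $y$, then $C_{n+1}(y)=C_n(x)$ exactly, so the gap $C'_n(x)-C_n(x)$ is literally an increment of the convergent sequence at $y$.

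Concretely, I would first fix $x$ and use Lemma~\ref{lemma1} together with the Borel--Cantelli lemma to pick an a.s.\ finite (random) time $n_0=n_0(x)$ such that for every $n\geqslant n_0$ and every $y\in\mathcal G_x$ the event $\mathcal D_n(y)$ does not occur; this is legitimate since $\mathcal G_x$ is finite. I would also recall that $\lim_n C_n(y)$ exists for each $y$. Then comes the main step, a short case analysis for $n\geqslant n_0$: either $a_n(x)=x$, in which case $C'_n(x)=C_n(x)$; or $y:=a_n(x)\ne x$, so that $x\in E_n(y)$ and, since $\mathcal D_n(y)$ is excluded, $E_n(y)=\{x\}$, whence $C_{n+1}(y)=C_n(x)$ and therefore $C'_n(x)-C_n(x)=C_n(y)-C_{n+1}(y)$. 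In both cases we obtain, for $n\geqslant n_0$,
\[
 0 \;\leqslant\; C'_n(x)-C_n(x) \;\leqslant\; \max_{y\in\mathcal G_x}\bigl|C_n(y)-C_{n+1}(y)\bigr|,
\]
and since $\mathcal G_x$ is a finite set and each term on the right tends to $0$ as $n\to\infty$, a squeeze gives $C'_n(x)-C_n(x)\to 0$ almost surely; as $\Z^d$ is countable, this holds simultaneously for all $x$.

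The only point requiring care is the bookkeeping about where the bad event must be suppressed: it is $\mathcal D_n(y)$ for the (a priori $n$-dependent) \emph{target} vertex $y=a_n(x)$, so $n_0$ must be chosen to rule out $\mathcal D_n(\cdot)$ simultaneously on the entire finite neighbourhood $\mathcal G_x$, not merely at $x$ itself. Once that is arranged everything else is routine. One could instead run the argument through the trichotomy of Lemma~\ref{lemma3}, observing that on $\mathcal A$- and $\mathcal B$-vertices one has $a_n(x)=x$ eventually so the difference is eventually $0$, and treating $\mathcal C$-vertices exactly as above; but since the $\mathcal C$-case already covers all situations, the direct route above seems the most economical.
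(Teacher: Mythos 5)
Your proof is correct, and it takes a genuinely shorter route than the paper's, although both turn on the same one-step identity: once Lemma~\ref{lemma1} lets you fix $n_0$ so that $\mathcal D_n(z)$ fails for every $z\in\mathcal G_x$ and $n\geqslant n_0$, the target $y=a_n(x)\ne x$ satisfies $E_n(y)=\{x\}$ and hence $C_{n+1}(y)=C_n(x)$. The paper deploys this identity inside the trichotomy of Lemma~\ref{lemma3}: it disposes of $\mathcal A$- and $\mathcal B$-vertices trivially, and for a $\mathcal C$-vertex it iterates the identity through the shrinking sets $V_m=\{y\sim x:\ C_m(y)>C_n(x)\}$ to get $C'_{n+2d}(x)\leqslant C_n(x)$ after at most $2d$ steps, so that the sandwich $C_{n+2d}(x)\leqslant C'_{n+2d}(x)\leqslant C_n(x)$ together with the convergence of $C_n(x)$ at $x$ itself concludes. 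You instead use the identity once, rewriting $C'_n(x)-C_n(x)=C_n(y)-C_{n+1}(y)$ and squeezing by $\max_{y\in\mathcal G_x}\bigl|C_n(y)-C_{n+1}(y)\bigr|\to 0$, which only requires the convergence of $C_n(y)$ at each of the finitely many $y\in\mathcal G_x$ (the fact recorded right after Lemma~\ref{lemma1}). This buys economy: you need neither Lemma~\ref{lemma3} nor the $2d$-step iteration, and your bookkeeping point --- that $\mathcal D_n(\cdot)$ must be suppressed on all of $\mathcal G_x$, not just at $x$ --- is exactly the right one. What the paper's version buys is the explicit same-vertex comparison $C'_{n+2d}(x)\leqslant C_n(x)$, but for the corollary as stated, and for its use in the proof of Theorem~\ref{theo1}, your decrement-at-the-target argument is fully sufficient. (Both write-ups tacitly treat the relevant resource amounts as finite when passing from convergence to vanishing increments, so this is not a point against you.)
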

\begin{proof}
If $x$ is an ${\cal A}$-vertex or a ${\cal B}$-vertex then $C'_n(x) - C_n(x) =0$
eventually.
So suppose $x$ is a ${\cal C}$-vertex.
Take $n_0$ so that ${\cal C}_n(x)$ happens for all for $n\geqslant n_0$.
By Lemma~\ref{lemma1} we can further assume that $\mathcal D_n(z)$ will not
happen for any $z\in\mathcal G_x$ and $n\geqslant n_0$, in particular $C_{n+1}(z)\leqslant C_n(z)$.
We claim that $C'_{n+2d}(x)\leqslant C_n(x)$ for all $n\geqslant n_0$.
Since in addition $C_{n+2d}(x) \leqslant C'_{n+2d}(x)$  and $C_n(x)$
converges, this will finish the proof.

Let $n \geqslant n_0$ be fixed and for $m\geqslant n$ let $V_m = \big\{y\sim x:
C_m(y)>C_n(x)\big\}$.
Since resource quantities no longer increase, $V_{m+1}\subseteq V_m$.
Moreover, the occurrence of $\mathcal C_n(x)$ implies that $a_n(x)\neq x$
and, since $\mathcal D_n\big(a_n(x)\big)$ cannot occur, $a_n(x)$ is a ${\cal
C}$-vertex and $E_n\big(a_n(x)\big)=\{x\}$.
Thus, $C_{n+1}\big(a_n(x)\big)=C_n(x)$ and therefore $a_n(x)\not\in V_{n+1}$.
If $V_{n+1}=\emptyset$ we are done, and if $V_{n+1}\ne\emptyset$ we find again
that $a_{n+1}(x)\in V_{n+1}\backslash V_{n+2}$.
Proceeding this way, and since $|V_n|<2d$, we must find $V_{n+j}=\emptyset$ in
less than $2d$ steps.
\end{proof}

\begin{lemma}
\label{lemma4}
For each fixed $k\in\N$ and $x\in\Z^d$,
\[
 \Pb\left( 0 <C'_n(x)-C_n(x) < \delta,\ \big|S_n(x)\big|\leqslant k \right)
 \to 0
\]
as $\delta\to0$, uniformly in $n$.
\end{lemma}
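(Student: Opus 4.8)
The plan is to strip off the easy parts of the event and, on what remains, use the mass-transport principle to replace the time index~$n$ by a bounded amount of combinatorial data. For bookkeeping write $\Theta_n$ for the event in the statement and $\Theta_n(y)$ for the analogous event with a vertex $y$ in place of $x$.

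First the reductions. On $\Theta_n$ we have $C'_n(x)>C_n(x)\ge 0$, hence $C_n(x)>0$—which excludes $\mathcal A_n(x)$—and $a_n(x)\ne x$, so $x\notin E_n(x)$, excluding $\mathcal B_n(x)$; also $C_n(x)>0$ forces $S_n(x)\ne\emptyset$. Thus $\Theta_n\subseteq\mathcal C_n(x)\cup\mathcal D_n(x)\cup\mathcal E_n(x)$. The proof of Lemma~\ref{lemma1} gives $\sum_n\Pb(\mathcal D_n(x))\le1$, and $\mathcal E_n(x)$ occurs at most once so $\sum_n\Pb(\mathcal E_n(x))\le1$; hence $\Pb(\mathcal D_n(x)),\Pb(\mathcal E_n(x))\to0$ as $n\to\infty$. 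Together with the elementary fact that \emph{for each fixed $n$} one has $\Pb(\Theta_n)\to0$ as $\delta\to0$ (the sets $\{0<C'_n(x)-C_n(x)<\delta\}$ decrease to $\emptyset$), this gives $\sup_n\Pb\big(\Theta_n\cap(\mathcal D_n(x)\cup\mathcal E_n(x))\big)\to0$: given $\varepsilon$, choose $N$ with $\Pb(\mathcal D_n(x))+\Pb(\mathcal E_n(x))<\varepsilon$ for $n\ge N$, then $\delta$ small enough that $\Pb(\Theta_n)<\varepsilon$ for the finitely many $n<N$. So it remains to bound $\sup_n\Pb(\Theta_n\cap\mathcal C_n(x))$, and, by the same remark, only uniformly over $n\ge N$ for a convenient fixed $N$.

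The heart of the argument is a mass transport. On $\mathcal C_n(x)\cap\{|S_n(x)|\le k\}$, trace the clump of resource now at $x$ backwards in time: $S_n(x)$ is the disjoint union of the sub-clumps the clump occupied at earlier times, and its merges form a tree with at most $k$ leaves (the original vertices $v\in S_n(x)$), the clump merely translating between consecutive merges. Using a fixed translation-equivariant rule, select a distinguished leaf of this tree and transport a unit of mass from $x$ to that leaf on $\Theta_n\cap\mathcal C_n(x)$. Since each qualifying clump carries exactly one distinguished leaf and $\{S_n(w)\}_w$ partitions $\Z^d$, the mass-transport principle yields
\[
\Pb\big(\Theta_n\cap\mathcal C_n(x)\big)\;=\;\Pb\big(\Theta_n(L_n(0))\cap\mathcal C_n(L_n(0)),\ 0\text{ is the distinguished leaf of its clump at time }n\big).
\]
On this last event, $0$'s own initial resource is one of the at most $k$ resources forming the qualifying clump at $y:=L_n(0)$: writing $C_n(y)=C_0(0)+R$ with $R=\sum_{w\in S_n(y)\setminus\{0\}}C_0(w)\ge0$ and $z:=a_n(y)$, the gap constraint $0<C_n(z)-C_n(y)<\delta$ becomes
\[
C_0(0)\ \in\ \big(\Gamma-\delta,\ \Gamma\big),\qquad\Gamma:=C_n(z)-R,
\]
an interval \emph{open} at its right endpoint $\Gamma$ — which is exactly what keeps the argument alive even when the law of $C_0(0)$ has atoms, since $\bigcap_{\delta>0}(\Gamma-\delta,\Gamma)=\emptyset$.

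To conclude, condition on everything except $C_0(0)$ — all other initial resources and all tie-breaks — so the trajectory becomes a function of the single number $s=C_0(0)$, and refine by the combinatorial type of the clump's history (the topology of the merge tree and the finitely many vertex sets involved, notably $S_n(y)$, $z$ and $S_n(z)$). On a fixed type, $\Gamma$ is a sum of $C_0$-values over fixed sets not containing $0$, hence constant, so the admissible $s$ form an open interval of length $\delta$ with that constant as right endpoint, whose conditional probability tends to $0$ as $\delta\to0$ by dominated convergence. The step I expect to be the real obstacle is to make this bound simultaneously (a)~uniform in $n$ and (b)~vanishing as $\delta\to0$: as $s$ varies, $0$'s clump can reorganize, so a priori the admissible set is a union of length-$\delta$ intervals, one per realized combinatorial type, with endpoints that themselves depend on $n$ through the whole dynamics. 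Resolving this seems to need a bound — depending on $k$ and $d$ but not on $n$ — on how many combinatorially distinct ways $0$'s small clump can ``barely lose to a neighbour'' as its distinguished leaf's initial value varies, presumably via a monotonicity or coupling comparison that controls these reorganizations. Granting such a bound the $n$-dependence drops out of the count, and the openness of each interval at its threshold gives the vanishing as $\delta\to0$; the mass-transport identity and the open-interval observation are correct without it but do not by themselves deliver uniformity in $n$.
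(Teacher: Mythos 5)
Your preliminary reduction to $\mathcal C_n(x)$ is correct but irrelevant to the real difficulty, and your mass-transport step is sound — indeed it is essentially the paper's device, except that the paper transports from $x$ to \emph{every} $v\in S_n(x)$ and settles for the inequality $\Pb(\mathcal V_n(x))\leqslant \E\big[|S_n(x)|\I_{\mathcal V_n(x)}\big]=\Pb(\mathcal U_n(v))$ rather than selecting a distinguished leaf to get an equality. The genuine gap is exactly where you flag it. After transporting to the origin's parcel you still face a family of events indexed by $n$, and the resampling plan — fix everything but $C_0(0)$, classify by the combinatorial type of the merge history, and note that each type contributes an interval $(\Gamma-\delta,\Gamma)$ of admissible values — cannot by itself give uniformity in $n$: the thresholds $\Gamma$ and the set of realized types change with $n$, no $n$-independent bound on their number is available, and under bare translation invariance the conditional law of $C_0(0)$ given the rest may be degenerate or may charge every interval $(\Gamma(n)-\delta,\Gamma(n))$. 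So the argument as written does not close.

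The missing idea is a monotonicity statement along the trajectory of a fixed parcel, which removes the $n$-dependence in one stroke. Set $A_n(v)=C_n(L_n(v))$, $A'_n(v)=C'_n(L_n(v))$ and $N_n(v)=|S_n(L_n(v))|$. Then $N_n(v)$ is non-decreasing, and the gap $A'_n(v)-A_n(v)\geqslant 0$ can only decrease at a step where $N_n(v)$ increases: if the clump moves from $w$ to $z=a_n(w)$ without merging, then $z$'s own resource has moved to some $z''=a_n(z)\in\mathcal G_z$, so $C_{n+1}(z'')\geqslant C_n(z)$ while $A_{n+1}(v)=C_n(w)=A_n(v)$, whence $A'_{n+1}(v)-A_{n+1}(v)\geqslant C_n(z)-C_n(w)=A'_n(v)-A_n(v)$. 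Consequently, over all $n$ with $N_n(v)\leqslant k$ and $A'_n(v)>A_n(v)$, the gap is bounded below by the minimum of at most $k$ strictly positive random variables (one per block of constancy of $N_n(v)$), i.e.\ by a single positive random variable not depending on $n$; this gives $\Pb\big(0<A'_n(v)-A_n(v)<\delta,\ N_n(v)\leqslant k\big)\to0$ as $\delta\to0$ uniformly in $n$, and your mass-transport identity (or the paper's inequality) then finishes the proof. Any completion of your type-counting route would need an input of exactly this kind to control the thresholds uniformly in $n$.
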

\begin{proof}
For $v\in\Z^d$ consider $S_n\big(L_n(v)\big)$, the set of vertices
whose initial resource were joined with that of $v$ by time $n$ (according
to the dynamics rules, once two or more initially distinct 
sources join, they never separate), and define $N_n(v) := \left|S_n\big(L_n(v)\big)\right|$.
For each fixed $v$, $N_n(v)$ is non-decreasing in $n$.

Let $A_n(v) = C_n\big(L_n(v)\big)$ be the total amount of resource at $L_n(v)$
at
time $n$ and let $A'_n(v) = C'_n\big(L_n(v)\big) = C_n\big(L_{n+1}(v)\big)$.
For each vertex $v$, the value $A'_n(v) - A_n(v)$ is nonnegative
and it can only decrease when $N_n(v)$ increases.

It is therefore the case that $\inf\big[A'_n(v)-A_n(v)\big]$, the infimum taken
over all $n$ such that $N_n(v)\leqslant k$ and $A'_n(v)>A_n(v)$, is a strictly
positive random variable.
In particular, for fixed $k$,
\begin{equation}
 \label{eq1uniformdecay}
  \Pb\big(0<A'_n(v)-A_n(v)<\delta, N_n(v) \leqslant
  k\big)\to0 \mbox{ as } \delta \to 0 \quad \mbox{ uniformly in $n$}
 .
\end{equation}

We shall relate the above limit to the desired result via a quantitative use of
the mass-transport principle, what we call an unlikelihood transfer argument.

Let us omit $\delta,k$ from now on and denote the event in~\eqref{eq1uniformdecay}
by $\mathcal U_{n}(v)$.
For $x\in\Z^d$, let $\mathcal V_{n}(x)$ denote the event that $\mathcal
U_{n}(v)$ occurs for some $v\in S_n(x)$.
If $\mathcal U_{n}(v)$ happens for any such $v$ then it happens for all of them
(because the values of $A_n$, $A_n'$ and $N_n$ are constant within $S_n(x)$ and
equal $C_n(x)$, $C_n'(x)$, and $|S_n(x)|$, respectively).
Writing $m(v,x)=\I_{L_n(v)=x,\mathcal U_n(v)}$ we have that $\I_{\mathcal
U_n(v)}=\sum_x m(v,x)$.
It follows from the translation invariance of the process that
$
 \E \sum_y m(w,w+y) =
 \E \sum_y m(w-y,w) 
$,
which means
\begin{equation}
 \label{eq1transport}
 \textstyle
 \E \sum_x m(w,x) =
 \E \sum_v m(v,w)
 \qquad \forall\ w
 ,
\end{equation}
giving
\begin{equation}
 \label{eq1unlikely}
 \Pb\big[\mathcal U_{n}(v)\big] = \E \big[ |S_n(x)| \I_{\mathcal
 V_{n}(x)} \big]
 \geqslant  \Pb\big(\mathcal V_{n}(x)\big)
 \qquad \forall\ v,x
 .
\end{equation}

Now $\mathcal V_{n}(x)$ is exactly the event considered in the statement of the
lemma and since $\Pb\big[\mathcal U_{n}(v)\big] \to 0$  as $\delta \to 0$
uniformly in $n$ the result follows.
\end{proof}

\begin{proof}[Proof of Theorem~\ref{theo1}:]
By Lemma~\ref{lemma3} it is enough to show that $\Pb\big(a_n(x)\ne x\big)\to 0$
as $n\to\infty$.
For any choice of $\delta>0$, if $a_n(x)\ne x $ then either
$C'_n(x)=C_n(x)$, which implies that there is a tie at $x$, or
$C'_n(x)-C_n(x)\geqslant \delta$ or $0 < C'_n(x) - C_n(x) < \delta$.
By Lemma~\ref{lemma2} and Corollary~\ref{cor3} the probabilities of the first
two events tend to zero as $n\to\infty$.

For any choice of $k\in\N$, the last event can be split into two cases,
according to whether $|S_n(x)| \leqslant k$ or not.
By Lemma~\ref{lemma4}, the probability that this event happens in the first
case tends to zero as $\delta\to0$, uniformly in $n$.
Now, with $m(v,x)=\I_{L_n(v)=x}$, \eqref{eq1transport} gives
$\E\big(|S_n(x)|\big)=1$ so $\Pb\big(|S_n(x)|>k\big)<\frac1k$.

So, if we consider $\limsup_{n\to\infty}\Pb\big(a_n(x)\ne x\big)$,
let $\delta\to0$ and then let $k\to\infty$ we get the desired limit.
\end{proof}

\section{Concluding remarks}
\label{sec:conclusion}

We conclude this paper by
discussing how Theorem~\ref{theo1} extends to more
general settings and why a positive
answer to Question~\ref{item2conservation} does not follow from the previous
arguments.

\paragraph{Generalizations}

Our proof applies in other settings with much generality, as long as the mass transport principle~\eqref{eq1transport} is true.
It thus covers cases when the distribution of initial resources is invariant with respect to a transitive unimodular group of automorphisms.
Examples include Cayley graphs, regular trees, etc.
The only change in the proof is to replace $2d$ by the graph degree.

It also covers graphs that are locally finite and can be periodically embedded in $\R^d$.
Namely, one can consider graphs whose vertex set may be written as
$[J]\times \Z^d$, where $[J]:=\{1,\dots,J\}$,
and whose edge set is invariant under the mappings $(j,x)\mapsto(j,x+y)$ for all
$y\in\Z^d$.
Here translation invariance is understood as the distribution of the
initial resources being invariant under the above mappings.

The changes in the proof for the above case are the following.
In Lemma~\ref{lemma1}, notice that $\mathcal E_n(j,x)$ will happen at most once
for each $j\in[J]$ and fixed $x$; so $D_n(x): = \big(\sum_1^J|E_n(j,x)|\big)-J$ 
satisfies $\E\big[\sum_n D_n(x)^+\big] = \E\big[\sum_n D_n(x)^-\big] \leqslant J$,
thus
$D_n(x)>0$ can occur for at most finitely many $n$'s;
finally $\mathcal D_n(j,x)$ corresponds to $|E_n(j,x)|-1>0$, which in turn
implies that $D_n(x)>0$ or that
$\mathcal E_n(j',x)$ occurs for some other $j'\in[J]$.
In Lemma~\ref{lemma2} and Corollary~\ref{cor3} replace $2d$ by $\max_j
\deg(j,x)$.
In Lemma~\ref{lemma4} we take $m(v,x)=\sum_{j,j'}\I_{L_n(j,v)=(j',x),\mathcal
U_n(j,v)}$ and the mass-transport principle~\eqref{eq1transport} holds with the
same proof.
In the proof of Theorem~\ref{theo1} we can write $\E\big(\sum_j
|S_n(j,x)|\big)=J$, giving $\Pb\big(|S_n(j,x)|>k\big) < \frac J k$ for any
$(j,x)$.

\paragraph{Open question}

It would be natural to try to answer Question~\ref{item2conservation} using an argument similar to the proof of Theorem~\ref{theo1}, in a way that uses the result to reinforce itself.

First notice that a yes to Question~\ref{item2conservation} is equivalent to showing that every amount of initial resource eventually stops moving.
Indeed, if $C_\infty(x):=\lim_n C_n(x)$ and $F_v$ denotes the event that $L_n(v)$ is eventually constant, then $\E\big[C_\infty(x)\big]= \E\big[C_0(v)\I_{F_v}\big]$, so a yes to the question is equivalent to $F_v$ happening almost surely.
This equality follows from Theorem~\ref{theo1} and~\eqref{eq1transport} with
\[
 m(v,x) =
 \begin{cases}
 C_0(v), & \mbox{if } L_n(v)=x \mbox{ eventually,} \\
 0, & \mbox{otherwise.}
 \end{cases}
\]

Let us denote by $\tilde{\mathcal U}_n(v)$ the event that, for some $m>n$,
$L_{m+1}(v)\not =L_m(v)$.
This event means that the resource that started at $v$ is still going to move
after time $n$.
Therefore Question~\ref{item2conservation} boils down to 
whether we can show that
$\Pb\big[\tilde{\mathcal U}_n(v)\big]\to0$ as $n\to\infty$.

Let $\tilde{\mathcal V}_n(x)$ denote the event that the resource found at vertex
$x$ at time $n$ will leave $x$ after time $n$, that is, $a_m(x)\ne x$ for some
$m>n$.
Note that this is equivalent to the event that $\tilde{\mathcal U}_{n}(v)$
occurs for some $v\in S_n(x)$, and again if it happens for any such $v$ then it
happens for all of them.
This implies that the analogue of~\eqref{eq1unlikely} holds.

By Theorem~\ref{theo1}, $\Pb\big[\tilde{\mathcal V}_n(x)\big]\to0$.
The inequality in the analogue of~\eqref{eq1unlikely} does not help here, but we
could make use of the equality if the distribution of $|S_n(x)|$ satisfied an
appropriate moment (or uniform integrability) condition.
This is another quantitative use of the mass-transport principle, that we call
unlikelihood sharing principle.

For instance, suppose one can prove that $E\big(|S_n(x)|^\alpha)$ stays bounded as $n \to \infty$ for some $\alpha>1$.
Then $\Pb\big[\tilde{\mathcal V}_n(x)\big]\to0$ implies that $\Pb\big[\tilde{\mathcal U}_n(v)\big]\to0$ since
\[
 \E ( S \I_{\tilde{\mathcal V}} )
 = \Pb(\tilde{\mathcal V}) \E ( S |{\tilde{\mathcal V}} )
 \leqslant
 \Pb(\tilde{\mathcal V}) 
 \big[\E ( S^\alpha |{\tilde{\mathcal V}}) \big]^{1/\alpha}
 \leqslant
 \big[\Pb(\tilde{\mathcal V})\big]^{1-\frac1\alpha} \E \big[ S^\alpha
  \big]^{1/\alpha},
\]
where $S=|S_n(x)|$ and $\tilde{\mathcal V}=\tilde{\mathcal V}_{n}(x)$.

Unfortunately, we do not know how to control the tail of the distribution of
$|S_n(x)|$, and Question~\ref{item2conservation} remains an open problem.

While the final version of this paper was in preparation, it was found by van den Berg, Hilário and Holroyd~\cite{bergXX} that Question~\ref{item2conservation} as stated has a negative answer: there do exist nonnegative translation-invariant initial distributions for which resources do escape to infinity.

\paragraph{Acknowledgments}

This work had financial support from ARCUS Program, CNPq grants 140532/2007-2
and 141114/2004-5, FAPERJ, FAPESP grant 07/58470-1, FSM-Paris,
German Israeli Foundation grant I-870-58.6/2005 (O.L.)
US NSF under grants DMS-08-25081 (O.L.), DMS-06-06696 (C.M.N.), DMS-06-45585 (S.S.),
and OISE-07-30136 (O.L., C.M.N., and S.S.).

Part of this collaboration took place during the Fall, 2008 semester on
\emph{Interacting Particle Systems} at IHP, Paris; we thank the organizers and
staff for providing a nice working environment.
M.~R.~Hilário and L.~T.~Rolla thank CWI-Amsterdam and C.~M.~Newman and
L.~T.~Rolla thank CRM-Montréal for hospitality.
V.~Sidoravicius thanks Chris Hoffman for many fruitful discussions.


\begin{thebibliography}{1}

\bibitem{berg98}
{\sc J.~van~den Berg and A.~Ermakov}, {\em On a distributed clustering process
  of {C}offman, {C}ourtois, {G}ilbert and {P}iret}, J. Appl. Probab., 35
  (1998), pp.~919--924.

\bibitem{bergXX}
{\sc J.~van~den Berg, M.~R. Hilário, and A.~Holroyd}, In preparation.

\bibitem{berg91}
{\sc J.~van~den Berg and R.~W.~J. Meester}, {\em Stability properties of a flow
  process in graphs}, Random Structures Algorithms, 2 (1991), pp.~335--341.

\bibitem{coffman91}
{\sc E.~Coffman, P.-J. Courtois, E.~Gilbert, and P.~Piret}, {\em {A distributed
  clustering process}}, J. Appl. Probab., 28 (1991), pp.~737--750.

\end{thebibliography}
\end{document}